
\documentclass[10pt]{amsart}

\usepackage{amsfonts,amssymb,amscd,amstext}
%
%
\usepackage[charter]{mathdesign}
%
%
\usepackage[paper size={210mm,297mm},left=34mm,right=34mm,top=43.5mm,bottom=43.5mm]{geometry}
\usepackage[colorlinks=true,linkcolor=blue,citecolor=blue]{hyperref}
\usepackage{graphicx}
\usepackage[utf8]{inputenc}
\usepackage{esint}
\usepackage{enumerate}

\pretolerance=0

\renewcommand{\le}{\leqslant}
\renewcommand{\ge}{\geqslant}
\newcommand{\ptl}{\partial}

\newcommand{\rr}{{\mathbb{R}}}

\newcommand{\nn}{{\mathbb{N}}}

\newcommand{\escpr}[1]{g(#1)}

\newcommand{\Om}{\Omega}

\newcommand{\ga}{\gamma}

\newcommand{\vol}[1]{|#1|}

\newcommand{\hh}{H}

\DeclareMathOperator{\divv}{div}

\DeclareMathOperator{\intt}{int}

\DeclareMathOperator{\inj}{inj}

\newtheorem{theorem}{Theorem}[section]

\newtheorem{lemma}[theorem]{Lemma}

\theoremstyle{definition}

\theoremstyle{remark}

\newtheorem{remark}[theorem]{Remark}

\numberwithin{equation}{section}

\setcounter{tocdepth}{1}

\begin{document}

\title[The isoperimetric profile of a complete manifold]{Continuity of the isoperimetric profile of a complete Riemannian manifold under sectional curvature conditions}

\author[M.~Ritor\'e]{Manuel Ritor\'e} \address{Departamento de
Geometr\'{\i}a y Topolog\'{\i}a \\
Universidad de Granada \\ E--18071 Granada \\ Espa\~na}
\email{ritore@ugr.es}

\date{\today}

\thanks{The author has been supported by Mineco-Feder research grant MTM2013-48371-C2-1-P and Junta de Andaluc\'{\i}a grant FQM-325}
\subjclass[2000]{49Q10, 49Q20} 
\keywords{Strictly convex functions, isoperimetric profile, convex sets, Hadamard manifolds, positive curvature}

\begin{abstract}
Let $M$ be a complete Riemannian manifold possessing a strictly convex Lipschitz continuous exhaustion function. We show that the isoperimetric profile of $M$ is a continuous and non-decreasing function. Particular cases are Hadamard manifolds and complete non-compact manifolds with strictly positive sectional curvatures.
\end{abstract}

\maketitle

\thispagestyle{empty}

\section{Introduction}

Let $M$ be a complete Riemannian manifold possessing a strictly convex Lipschitz continuous exhaustion function. The aim of this paper is to show that the isoperimetric profile $I_M$ of $M$ is a continuous and non-decreasing function. In particular, Hadamard manifolds: complete, simply connected Riemannian manifolds with non-positive sectional curvatures (possibly unbounded), and complete manifolds with strictly positive sectional curvatures satisfy this assumption.

The isoperimetric profile of a Riemannian manifold is the function that assigns to a given positive volume the infimum of the perimeter of the sets of this volume. The continuity of the isoperimetric profile of a compact manifold follows from standard compactness results for sets of finite perimeter and the lower semicontinuity of perimeter \cite{MR2976521}. Alternative proofs are obtained from concavity arguments \cite[\S~7(i)]{MR875084}, \cite{bayle,MR2177105,MR1803220}, or from the metric arguments by Gallot \cite[Lemme~6.2]{MR999971}. When the ambient manifold is a non-compact homogeneous space, Hsiang showed that its isoperimetric profile is a non-decreasing and absolutely continuous function \cite[Lemma~3, Thm.~6]{MR1161609}. In Carnot groups or in cones, the existence of a one-parameter group of dilations implies that the isoperimetric profile is a concave function of the form $I(v)=C\,v^{q/(q+1)}$, where $C>0$ and $q\in\nn$, and so it is a continuous function \cite{MR1885654,MR2067135,MR2000099}. Benjamini and Cao \cite[Cor.~1]{MR1417620} proved that the isoperimetric profile of a simply connected, convex at infinity, complete surface $M^2$ satisfying $\int_MK^+\,dM<+\infty$, is an strictly increasing function. When a Riemannian manifold has compact quotient under the action of its isometry group, Morgan proved that isoperimetric regions exists for any given volume and are bounded \cite{MR2455580}, see also \cite{MR2979606}. Using the concavity arguments in \cite{bayle, MR1803220} this implies that the profile is locally the sum of a concave function and a smooth one. The author showed in \cite{MR1857855} existence of isoperimetric regions in any complete convex surface. This implies the concavity of the isoperimetric profile \cite[Cor.~4.1]{MR1857855} and hence its continuity. Nardulli \cite[Cor.~1]{MR3215337} showed the absolute continuity of the isoperimetric profile under the assumption of bounded $C^{2,\alpha}$ geometry. A manifold $N$ is of $C^{2,\alpha}$ bounded geometry if there is a lower bound on the Ricci curvature, a lower bound on the volume of geodesic balls of radius $1$, and for every diverging sequence $\{p_i\}_{i\in\nn}$, the pointed Riemannian manifolds $(M,p_i)$ subconverge in the $C^{2,\alpha}$ topology to a pointed manifold. Mu\~noz Flores and Nardulli \cite{abraham-nardulli} prove continuity of the isoperimetric profile of a complete non-compact manifold $M$ with Ricci curvature bounded below and volume of balls of radius one uniformly bounded below. Partial results for cylindrically bounded convex sets have been obtained by Ritoré and Vernadakis \cite[Prop.~4.4]{MR3385175}. For general unbounded convex bodies, the concavity of the power $I^{(n+1)/n}$ of the isoperimetric profile, where $(n+1)$ is the dimension of the convex body, has been proven recently by Leonardi, Ritoré and Vernadakis \cite{lrv}. Hass \cite{1604.02768} recently obtained examples of disconnected isoperimetric regions in Hadamard manifolds, thus showing that the corresponding isoperimetric profiles are not concave.

An example of a manifold with density with discontinuous isoperimetric profile has been described by Adams, Morgan and Nardulli \cite[Prop.~2]{cmn}. As indicated in the remark after Proposition~1 in \cite{cmn}, the authors tried to produce an example of a Riemannian manifold with discontinuous isoperimetric profile by using pieces of increasing negative curvature. By Theorem~\ref{thm:main-hadamard} in this paper, such a construction is not possible if the resulting manifold $M$ is simply connected with non-positive sectional curvatures. The first example of a complete Riemannian manifold whose isoperimetric profile is discontinuous has been recently given by Nardulli and Pansu \cite{1506.04892}.

In this paper we consider a complete Riemannian manifold $M$ of class $C^\infty$ having a strictly convex Lipschitz continuous exhaustion function $f\in C^\infty(M)$. These manifolds were considered by Greene and Wu \cite{MR0458336}, who derived interesting topological and geometric properties from the existence of such a function, e.g., such manifolds are always diffeomorphic to the Euclidean space of the same dimension \cite[Thm.~3]{MR0458336}. Complete non-compact manifolds with strictly positive sectional curvatures possess such a $C^\infty$ convex function. This follows from the existence of a continuous strictly convex function proven by Cheeger and Gromoll \cite{MR0309010} and the approximation result by $C^\infty$ functions by Greene and Wu \cite[Thm.~2]{MR0458336}. In Hadamard manifolds, the squared distance function is known to be a $C^\infty$ strictly convex exhaustion function \cite{MR823981}, although it is not (globally) lipschitz. Composing with a certain real function provides a $C^\infty$ strictly convex lipschitz continuous exhaustion function. Details are given in the proof of Theorem~\ref{thm:main-hadamard}. 

Our main result is Theorem~\ref{th:main}, where we prove the continuity of the isoperimetric profile of a complete non-compact Riemannian manifold having a strictly convex Lipschitz continuous exhaustion function $f\in C^\infty(M)$. Our strategy of proof consists on approximating the isoperimetric profile $I_M$ of $M$ by the profiles of the sublevel sets of $f$, see Lemma~\ref{lem:infI_r}. Then we show in Lemma~\ref{lem:I_r} that the strict convexity of $f$ implies that the profiles of these sublevel sets are strictly increasing. In addition, the compactness of the sublevel sets implies that these profiles are continuous. It follows that the isoperimetric profile $I_M$ is the non-increasing limit of a sequence of increasing continuous functions. Hence $I_M$ is right-continuous by Lemma~\ref{lem:right-cont}.

It only remains to show the left-continuity of $I_M$ to prove Theorem~\ref{th:main}. The main ingredient is Lemma~\ref{lem:lambda}, that plays an important role in the proof of Lemma~\ref{lem:infI_r}. In Lemma~\ref{lem:lambda} it is shown that, given a set $E\subset M$ of volume $v>0$, a bounded set $B\subset M$ of volume greater than $v$, a positive radius $r_0>0$, and a bounded set $D\subset M$ containing the tubular neighborhood of radius $r_0$ of $B$, we can always place a ball $B(x,r)$ of small radius centered at a point $x\in D$ such that $|B(x,r)\setminus E|\ge \Lambda(r)$. The expression for $\Lambda(r)$ in terms of $r$ is given in \eqref{eq:lambda} and implies that $\Lambda(r)$ approaches $0$ if and only if $r$ approaches $0$. This can be considered a refined version of Gallot's Lemme~6.2 in \cite{MR999971} (see also \cite[Lemma~2.4]{MR3215337}). Lemma~\ref{lem:lambda} will be used to add a small volume to a given set while keeping a good control on the perimeter of the resulting set. It is essential to add this small volume in a bounded subset of the manifold to use the classical comparison theorems for volume and perimeter of geodesic balls when the sectional curvatures are bounded from above and the Ricci curvature is bounded below. 

The continuity and monotonicity of the isoperimetric profiles of Hadamard manifolds, Theorem~\ref{thm:main-hadamard}, and of complete manifolds with strictly positive sectional curvatures, Theorem~\ref{thm:strictcurvature}, are corollaries of Theorem~\ref{th:main}.

A continuous monotone function can be decomposed as the sum of an absolutely continuous function and a continuous singular function (such as the Cantor function or  Minkowski's question mark function). It would be desirable to find conditions ensuring the absolute continuity of $I_M$.

An appropriate modification of the notion of convex function could make the arguments in this paper work in the case of a manifold with density.


The author wishes to thank Ana Hurtado, Gian Paolo Leonardi and C\'esar Rosales for their careful reading of the first version of this manuscript and their useful suggestions, and to the referees for their constructive comments.

\section{Preliminaries}

Given a Riemannian manifold $M$ and a measurable set $E\subset M$, we shall denote by $|E|$ its Riemannian volume. Given an open set $\Om\subset M$, the relative perimeter of $E$ in $\Om$, $P(E,\Om)$, will be defined by
\[
P(E,\Om):=\sup\bigg\{\int_E\divv X\,dM : X\in\frak{X}^1_0(\Om), ||X||_\infty\le 1\bigg\},
\]
where $\divv$ is the Riemannian divergence on $M$, $dM$ the Riemannian volume element, $\frak{X}^1_0(\Om)$ the set of $C^1$ vector fields with compact support in $\Om$, and $||\cdot||_\infty$ the $L_\infty$-norm of a vector field. The perimeter $P(E)$ of a measurable set $E\subset M$ is the relative perimeter $P(E,M)$ of $E$ in $M$.

The \emph{isoperimetric profile} of $M$ is the function $I_M:(0,|M|)\to\rr^+$ defined by
\[
I(v):=\inf\{P(E): E\subset M\ \text{measurable}, |E|=v\}.
\]
A measurable set $E\subset M$ is \emph{isoperimetric} if $P(E)=I_M(|E|)$. The isoperimetric profile function determines the isoperimetric inequality $P(F)\ge I_M(|F|)$ for any measurable set $F\subset M$, with equality if and only if $F$ is isoperimetric.

Open and closed balls of center $x\in M$ and radius $r>0$ will be denoted by $B(x,r)$ and $\overline{B}(x,r)$, respectively.

A continuous function $f:M\to\rr$ is \emph{strictly convex} if $f\circ\ga$ is strictly convex for any geodesic $\ga:I\to M$. It follows that a smooth function $f\in C^\infty(M)$ is strictly convex if and only if $(f\circ\ga)''>0$ on $I$ for any geodesic $\ga:I\to M$. A function $f:M\to\rr$ is \emph{Lipschitz continuous} if there exists $L>0$ such that $|f(p)-f(q)|\le L\,d(p,q)$ for any pair of points $p$, $q\in M$. We shall say that a continuous function $f:M\to\rr$ is an \emph{exhaustion function} if, for any $r> \inf f$, the set $C_r:=\{p\in M: f(p)\le r\}$ is a compact subset of $M$. In the sequel we shall assume the existence of a strictly convex Lipschitz continuous exhaustion function $f\in C^\infty(M)$. The following properties for $f$ and $M$ are known
\begin{enumerate}
\item $f$ has a unique minimum $x_0$, that is the only critical point of $f$.
\item The sets $\ptl C_r:=\{p\in M:f(p)=r\}$ are strictly convex hypersurfaces whenever $r>f(x_0)$. In particular, their mean curvatures are strictly positive.
\item If $f(x_0)=0$ then $B(x_0,L^{-1}r)\subset \intt(C_{r})$, and so $\overline{B}(x_0,L^{-1}r)\subset C_r$.
\item If $f(x_0)=0$ then there exists a positive constant $K$ such that $C_r\subset \overline{B}(x_0,K^{-1}r+1)$ for all $r\ge 1$.
\item $M$ is diffeomorphic to $\rr^n$.
\end{enumerate}
The uniqueness of $x_0$ follows from the arguments at the beginning of the proof of Theorem~3(a) in the paper by Greene and Wu \cite{MR0458336}. We remark that we can always normalize the function $f$, by adding a constant, so that $f(x_0)=0$. Property 2 is well-known while property 3 is obtained from the inequality $f(x)\le f(x_0)+Ld(x_0,x)$. Property 4 is a consequence of the arguments used to prove Theorem~5 in \cite{MR0458336}. We sketch here a proof for completeness: choose some $t_0\in (0,1)$ such that $\overline{B}(x_0,t_0)\subset C_1\subset C_r$. Let $K:=\inf\{f(\exp_{x_0}(t_0v)):v\in T_{x_0}M, |v|=1\}>0$. If $x\in C_r\setminus\overline{B}(x_0,t_0)$, using Hopf-Rinow's Theorem we can connect $x_0$ and $x$ by a unit-speed length-minimizing geodesic $\ga:[0,d(x_0,x)]\to M$. Since $(f\circ\ga)$ is convex we have
\[
r\ge f(x)=(f\circ\ga)(d(x_0,x))\ge (f\circ\ga)(t_0)+(f\circ\ga)'(t_0)(d(x_0,x)-t_0)\ge K(d(x_0,x)-t_0),
\]
thus implying $d(x_0,x)\le K^{-1}r+t_0<K^{-1}r+1$. If $x\in\overline{B}(x_0,t_0)$ the same inequality holds and proves that $C_r\subset \overline{B}(x_0,K^{-1}r+1)$. Finally, property 5 is proven in \cite[Theorem~3(a)]{MR0458336}.

Theorem~1(a) in the paper by Greene and Wu \cite{MR0458336} ensures the existence of a strictly convex function Lipschitz continuous exhaustion function in any complete Riemannian manifold with positive sectional curvatures. As we shall see later, such functions also exist on Hadamard manifolds, complete simply connected Riemannian manifolds with non-positive sectional curvatures.

The isoperimetric profiles $I_{r}:(0,|C_r|)\to\rr^+$ of the sublevel sets of $f$ will be defined by
\[
I_{r}(v):=\inf\{P(E):E\subset C_r\ \text{measurable}, |E|=v\}.
\]
The compactness of $C_r$ and the lower semicontinuity of perimeter imply the existence of isoperimetric regions in $C_r$ for all $v\in (0,|C_r|)$, as well as the continuity of the isoperimetric profile of $C_r$. From the definitions of $I_r$ and $I_M$ we have $I_M\le I_r\le I_s$ for all $r\ge s>f(x_0)$.

Given $\delta\in \rr$, we shall denote by $V_{\delta,n}(r)$ the volume of the geodesic ball in the $n$-dimensional complete simply connected manifold with constant sectional curvatures equal to $\delta$. When $\delta=0$, $V_{0,n}(r)=\omega_nr^n$, where $\omega_n$ is the $n$-volume of the unit ball in $\rr^n$. In case $\delta>0$, the radius $r$ will be taken smaller than $\pi/\delta^{1/2}$. In what follows, we shall take $\pi/\delta^{1/2}:=+\infty$ when $\delta\le 0$. The injectivity radius of $x_0\in M$ will be denoted by $\inj(x_0)$. If $K\subset M$, the injectivity radius of $K$ will be defined by $\inj(K)=\inf_{x\in K}\inj(x)$. When $K$ is relatively compact, $\inj(K)>0$.

The following result will play a crucial role in the sequel. Given a set $E\subset M$ of fixed volume $v$ and a small positive radius $r>0$, it allows us to place a ball $B(x,r)$ of radius $r>0$ whose center lies in a fixed bounded set $B$ (depending on the volume) so that $|B(x,r)\setminus E|\ge \Lambda(r)>0$, where $\Lambda(r)$ converges to $0$ if and only if $r$ converges to $0$.

\begin{lemma}
\label{lem:lambda}
Let $M$ be an $n$-dimensional complete non-compact Riemannian manifold, $E\subset M$ a measurable set of finite volume, $B\subset M$ a bounded measurable set such that $|B|-|E|>0$, and $\delta$ the supremum of the sectional curvatures of $M$ in $B$. Fix $r_0>0$ and choose $D\supset B$ bounded and measurable such that $d(B,\ptl D)>r_0$.  For any $0<r<\min\{r_0,\inj(B),\pi/\delta^{1/2}\}$ define
\begin{equation}
\label{eq:lambda}
\Lambda(r):=\frac{|B|-|E|}{|D|}\,V_{\delta,n}(r).
\end{equation}
Then there exists $x\in D$ such that
\[
|B(x,r)\setminus E|\ge \Lambda(r)>0.
\]
\end{lemma}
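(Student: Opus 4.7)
The plan is to prove the bound by averaging $|B(x,r) \setminus E|$ over $x \in D$ and showing that the average already exceeds $\Lambda(r)$, which then forces some point $x \in D$ to satisfy the desired inequality. This is the natural Fubini-type argument behind Gallot's Lemme~6.2, with the additional ingredients being (a) the sectional curvature bound $\delta$ on $B$ to control ball volumes from below, and (b) the buffer zone $d(B,\ptl D) > r_0$ that guarantees small geodesic balls centered in $B$ stay inside $D$.

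First I would compute, using Fubini's theorem,
\[
\int_D |B(x,r)\setminus E|\,dM(x) = \int_{M\setminus E} |\{x\in D : d(x,y)<r\}|\,dM(y) = \int_{M\setminus E}|D\cap B(y,r)|\,dM(y),
\]
using the symmetry of the distance function. Next I would restrict the integral on the right-hand side to the smaller set $B\setminus E \subseteq M\setminus E$. The crucial observation here is that for any $y\in B$ the condition $d(B,\ptl D)>r_0>r$ ensures $B(y,r)\subset D$, so $|D\cap B(y,r)|=|B(y,r)|$.

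Now I would invoke the Bishop--G\"unther volume comparison: since the sectional curvatures of $M$ on $B$ are bounded above by $\delta$, and $r<\min\{\inj(B),\pi/\delta^{1/2}\}$, the exponential map at every $y\in B$ is a diffeomorphism on the ball of radius $r$ in $T_yM$ and the Jacobian satisfies the required lower bound, giving $|B(y,r)|\ge V_{\delta,n}(r)$. Combining these ingredients yields
\[
\int_D |B(x,r)\setminus E|\,dM(x) \ge \int_{B\setminus E} V_{\delta,n}(r)\,dM(y) = |B\setminus E|\,V_{\delta,n}(r) \ge (|B|-|E|)\,V_{\delta,n}(r),
\]
where in the last step I used $|B\setminus E|\ge |B|-|B\cap E|\ge |B|-|E|$.

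Dividing by $|D|$ shows that the mean value of $|B(x,r)\setminus E|$ over $D$ is at least $\Lambda(r)$, so some $x\in D$ realizes $|B(x,r)\setminus E|\ge \Lambda(r)$. Positivity $\Lambda(r)>0$ is immediate from $|B|-|E|>0$ and $V_{\delta,n}(r)>0$ for $r>0$. The only subtle step is verifying the hypotheses of the Bishop--G\"unther comparison: the bound $r<\inj(B)$ guarantees no cut locus issues, and $r<\pi/\delta^{1/2}$ prevents conjugate points in the model space when $\delta>0$; both are built into the statement, so no real obstacle arises.
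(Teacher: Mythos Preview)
Your proof is correct and follows essentially the same approach as the paper: both apply Fubini to $\int_D |B(x,r)\setminus E|\,dM(x)$, restrict the resulting integral over $M\setminus E$ to $B\setminus E$, use the buffer condition $d(B,\ptl D)>r_0$ to replace $|D\cap B(y,r)|$ by $|B(y,r)|$, invoke the G\"unther--Bishop lower volume bound, and conclude by comparing the average over $D$ with $\Lambda(r)$.
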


\begin{proof}
Given two measurable sets $D, F\subset M$ of finite volume, Fubini-Tonelli's Theorem applied to the function $(x,z)\in D\times M\mapsto \chi_{F\cap B(x,r)}(z)$ yields
\begin{equation*}
\int_D |F\cap B(x,r)|\,dM(x)=\int_F |B(z,r)\cap D|\,dM(z).
\end{equation*}
For $F=M\setminus E$, this formula reads
\[
\int_D|B(x,r)\setminus E|\,dM(x)=\int_{M\setminus E}|B(z,r)\cap D|\,dM(z).
\]
Since $r\le r_0$, we have $B(z,r)\cap D=B(z,r)$ for any $z\in B$, and we get the bound
\begin{align*}
\int_{M\setminus E} |B(z,r)\cap D|\,dM(z)&\ge \int_{B\setminus E} |B(z,r)\cap D|\,dM(z)
\\
&=\int_{B\setminus E} |B(z,r)|\,dM(z)\ge|B\setminus E|\,V_{\delta,n}(r)
\\
&\ge \big(|B|-|E|\big)\,\,V_{\delta,n}(r),
\end{align*}
where inequality $|B(z,r)|\ge V_{\delta,n}(r)$ follows from G\"unther-Bishop's volume comparison theorem \cite[Thm.~III.4.2]{MR2229062}. On the other hand,
\[
\int_D|B(x,r)\setminus E|\,dM(x)\le |D|\,\sup_{x\in D}|B(x,r)\setminus E|.
\]
This way we obtain
\[
\sup_{x\in D} |B(x,r)\setminus E|\ge \frac{|B|-|E|}{|D|}\,V_{\delta,n}(r),
\]
and the result follows.
\end{proof}

The following proof follows the lines in \cite[Lemma~3.1]{rv4} with the modifications imposed by the geometry of $M$.

\begin{lemma}
\label{lem:infI_r}
Let $M$ be an $n$-dimensional complete non-compact Riemannian manifold possessing  a strictly convex Lipschitz continuous exhaustion function $f\in C^\infty(M)$. Then we have
\[
I_M(v)=\inf_{r>\inf f} I_{r}(v),
\]
for every $v\in (0,|M|)$.
\end{lemma}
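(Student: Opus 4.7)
The plan is to prove the two inequalities separately. The bound $I_M(v) \le \inf_{r > \inf f} I_r(v)$ is immediate, since every measurable $E \subset C_r$ with $|E| = v$ is itself a competitor for $I_M(v)$, giving $I_M(v) \le I_r(v)$ for each admissible $r$. For the reverse inequality I will construct, for any $\epsilon > 0$, a sublevel set $C_R$ and a competitor $E' \subset C_R$ with $|E'| = v$ and $P(E') < I_M(v) + 2\epsilon$, from which $\inf_r I_r(v) \le I_M(v)$ follows by letting $\epsilon \to 0$.

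Start from any $E \subset M$ with $|E| = v$ and $P(E) < I_M(v) + \epsilon$. The strategy is to truncate $E$ against a large sublevel set $C_R$, losing a small amount of volume, and then restore the missing mass by attaching a small geodesic ball placed inside a fixed bounded region where Lemma~\ref{lem:lambda} guarantees enough ``free space'' in $M\setminus E$. Concretely, fix $B := C_{R_1}$ with $|B| > v$ and $D := C_{R_2}$ with $d(B,\ptl D) > r_0$ for some $r_0 > 0$, and enlarge further to $C_{R_3}$ so as to contain the $r_0$-tubular neighbourhood of $D$. Lemma~\ref{lem:lambda} then furnishes, for every small $s$, a point $x_s \in D$ with $|B(x_s,s)\setminus E| \ge \Lambda(s)$.

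Next I choose $R \ge R_3$ so that both $v - |E \cap C_R|$ and $\mathcal{H}^{n-1}(E \cap \ptl C_R)$ are arbitrarily small. The first is possible because $|E| < \infty$ and $C_r \nearrow M$. For the second I invoke the coarea formula for the Lipschitz function $f$,
\[
\int_{R_3}^\infty \mathcal{H}^{n-1}(E \cap \{f = r\}) \, dr \le L \, |E \cap \{f > R_3\}| < \infty,
\]
so there exist arbitrarily large $R$ with $\mathcal{H}^{n-1}(E \cap \ptl C_R)$ as small as desired. With such an $R$ fixed, pick $s$ small enough that $\Lambda(s) > v - |E \cap C_R|$.

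Since $B(x_s,s) \subset C_R$, one has $B(x_s,s) \setminus (E \cap C_R) = B(x_s,s) \setminus E$, so the volume of the continuous family $E'_t := (E \cap C_R) \cup B(x_s,t)$, $t \in [0,s]$, runs from $|E \cap C_R| < v$ at $t=0$ to at least $|E \cap C_R| + \Lambda(s) > v$ at $t=s$. By the intermediate value property there is $t^* \in (0,s)$ with $|E'_{t^*}| = v$. Setting $E' := E'_{t^*} \subset C_R$ and using $P(A \cup B) \le P(A) + P(B)$ together with the standard trace estimate for truncation against a smooth level set,
\[
P(E') \le P(E \cap C_R) + P(B(x_s,t^*)) \le P(E) + \mathcal{H}^{n-1}(E \cap \ptl C_R) + P(B(x_s,s)),
\]
where the last term is small by G\"unther--Bishop comparison when $s$ is small; hence $P(E') < I_M(v) + 2\epsilon$, as required. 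The main technical obstacle is choosing $R$ so that the truncation costs both little volume \emph{and} little perimeter simultaneously, which the coarea bound above accomplishes using the Lipschitz exhaustion property of $f$ crucially.
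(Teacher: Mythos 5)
Correct, and essentially the paper's own approach: you truncate a near-minimizing set against a large sublevel set $C_R$, use the coarea formula with $|\nabla f|\le L$ to find a level with small slice, and restore the missing volume with a small ball placed by Lemma~\ref{lem:lambda} in a fixed bounded region, its perimeter controlled by comparison — exactly the paper's scheme, which runs it along a minimizing sequence with $r_{i+1}-r_i\ge i$ while you use a single $\varepsilon$. Two cosmetic repairs: bound $P(B(x_s,t^*))$ directly by the model sphere area at radius $t^*\le s$ via Bishop's comparison under a Ricci lower bound (not ``G\"unther--Bishop'', and not by $P(B(x_s,s))$, which need not dominate it), and fix the small radius scale before choosing $R$ — legitimate since $\Lambda$ depends only on the fixed sets $B$ and $D$, not on $R$ — so that ``pick $s$ small with $\Lambda(s)$ exceeding the deficit'' is not circular.
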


\begin{proof}
From the definition of $I_r$ it follows that $I_s\ge I_r\ge I_M$, for $r\ge s$, in the interval $(0,|C_s|)$. Hence $I_M \le \inf_{r>\inf f}I_r$. From now on, we assume $f$ is normalized so that $f(x_0)=0$.

To prove the opposite inequality we shall follow the arguments in \cite{MR2067135}. Fix $0<v<|M|$, and let ${\{ E_i \}}_{i\in \nn}\subset M$ be a sequence of sets of finite perimeter satisfying $\vol{E_i}=v$ and $\lim_{i\to \infty} P(E_i)=I_M(v)$.

Since $|E_i|=v<|M|$, there exists $R_i>0$ such that
\[
\vol{E_i\setminus C_{R_i}}<\frac{1}{i}.
\]
We now define a sequence of real numbers ${\{ r_i \}}_{i\in \nn}$ by taking $r_1:=R_1$ and $r_{i+1}:=\max\{r_i,R_{i+1}\}+i$. Then $\{r_i\}_{i\in\nn}$ satisfies
\[
r_{i+1}-r_i \ge i, \qquad\quad \vol{E_i\setminus C_{r_i}}<\frac{1}{i}.
\]

In case $|E_i\setminus C_{r_{i+1}}|=0$, we take a representative $G_i$ of $E_i$ contained in $C_{r_{i+1}}$ and we have
\begin{equation}
\label{eq:case1}
I_{r_{i+1}}(v)\le P(G_i)=P(E_i).
\end{equation}

In case $|E_i\setminus C_{r_{i+1}}|>0$, since $|\nabla f|\le L$, the coarea formula implies
\[
\frac{1}{L}\int_{r_i}^{r_{i+1}}{\hh}^{n-1}(E_i \cap \ptl C_t)\, dt< \vol{E_i}=v.
\]
Hence the set of $r\in [r_i,r_{i+1}]$ such that $H^{n-1}(E_i\cap\ptl C_r)\le Lv/(r_{i+1}-r_i)$ has positive measure, where $H^{n-1}$ is the $(n-1)$-dimensional Hausdorff measure in $M$. By \cite[Chap.~28, Ex.~18.3, p.~216]{MR2976521}, we can choose $ \rho(i)\in [r_i,r_{i+1}]$ in this set so that
\[
P(E_i\cap C_{\rho(i)})=P(E_i,\intt{C_{\rho(i)}})+H^{n-1}(E_i\cap C_{\rho(i)}).
\]
By the choice of $\rho(i)$ and the properties of $\{r_i\}_{i\in\nn}$ we also have
\[
{\hh}^{n-1}(E_i \cap \ptl C_{\rho(i)})\le\frac{Lv}{i}.
\]
%
%
Take now $t>0$ such that $|C_t|>v=|E_i|\ge |E_i\cap C_{\rho(i)}|$ for all $i$, and let $\delta(t)$ be the maximum of the sectional curvatures of $M$ in $C_t$. Let $v_i:=|E_i|-|E_i\cap C_{\rho(i)}|$. The sequence $\{v_i\}_{i\in\nn}$ converges to $0$ since $v_i=|E_i\setminus C_{\rho(i)}|\le |E_i\setminus C_{r_i}|<1/i$. We take $s_i$ defined by the equality
\[
v_i=\frac{|C_{t}|-|E_i\cap C_{\rho(i)}|}{|C_{2t}|}\,V_{\delta(t),n}(s_i),
\]
for $i$ large enough. From Lemma~\ref{lem:lambda} we can find, for every $i\in\nn$, a point $x_i\in C_{2t}$ such that
\[
|B(x_i,s_i)\setminus (E_i\cap C_{\rho(i)})|\ge v_i.
\]
Observe that $\lim_{i\to\infty }s_i=0$ since $\lim_{i\to\infty} |E_i\cap C_{\rho(i)}|=v<|C_t|$ and $\lim_{i\to\infty} v_i=0$. By the continuity of the functions $s\mapsto |B(x_i,s)\setminus E_i|$, we can find a sequence of radii $s_i^*\in (0,s_i]$ so that $B_i^*:=B(x_i,s_i^*)$ satisfies $|B_i^*\setminus (E_i\cap C_{\rho(i)})|=v_i$ for all $i$. For large $i$, we have the inclusions $B_i^*\subset C_{\rho(i)}$, the set $F_i:=(E_i\cap C_{\rho(i)})\cup B_i^*$ has volume $v$, and we get
\begin{equation}
\label{eq:case2}
\begin{split}
I_{r_{i+1}}(v)&\le P(F_i)\le P(E_i\cap C_{\rho(i)})+P(B_i^*)
\\
&\le  P(E_i,\intt{C_{\rho(i)}}) + {\hh}^{n-1}(E_i \cap \ptl C_{\rho(i)})+P(B_i^*)
\\
&\le P(E_i)+\frac{Lv}{i}+P(B_i^*).
\end{split}
\end{equation}
Since the balls $B_i^*$ are centered at points of the bounded subset $C_{2t}$ with radii $s_i^*$ converging to $0$, Bishop's comparison result for the area of geodesic spheres when the Ricci curvature is bounded below \cite[Thm.~III.4.3]{MR2229062} implies that $\lim_{i\to\infty} P(B_i^*)=0$. Taking limits in \eqref{eq:case1} and \eqref{eq:case2} when $i\to\infty$ we obtain $\inf_{r>\inf f} I_{r}(v)\le I_M(v)$.
\end{proof}

\begin{remark}
From the proof of Lemma~\ref{lem:infI_r} it is clear that the center of the balls $B_i^*$ must be taken in a bounded set of $M$ to have $\lim_{i\to\infty} P(B_i^*)=0$. Indeed, it is easy to produce a family of geodesic balls, each one in a hyperbolic space, with radii going to $0$ and perimeters converging to $+\infty$.
\end{remark}

The existence of a strictly convex exhaustion function on $M$ implies that the hypersurfaces $\ptl C_r=\{x\in M: f(x)=r\}$ foliate $M\setminus\{x_0\}$, where $x_0$ is the only minimum of $f$. The vector field $\nabla f/|\nabla f|$, defined on $M\setminus\{x_0\}$, is the outer unit normal to the hypersurfaces $\ptl C_r$. For any $x\in\ptl C_r$ and $e$ tangent to $\ptl C_r$ at $x$ we have
\[
g\big(\nabla_e\bigg(\frac{\nabla f}{|\nabla f|}\bigg),e\big)=\frac{1}{|\nabla f|}\,\nabla^2f(e,e)>0.
\]
Hence the hypersurfaces $\ptl C_r$ are strictly convex. The positive function $\divv(\nabla f/|\nabla f|)$ is defined on $M\setminus\{x_0\}$. Its value at $x\in\ptl C_r$ is the mean curvature of the hypersurface $\ptl C_r$ at $x$.


\begin{lemma}
\label{lem:I_r}
Let $M$ be an $n$-dimensional complete manifold $M$ possessing a strictly convex Lipschitz continuous exhaustion function $f\in C^\infty(M)$. Then the isoperimetric profile $I_r$, of the sublevel set $C_r$ is a continuous and strictly increasing function for $r>\inf f$.
\end{lemma}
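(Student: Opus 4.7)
The continuity of $I_r$ on $(0,|C_r|)$ was already noted just above the statement: compactness of $C_r$ together with $BV$-compactness and lower semicontinuity of perimeter yields existence of isoperimetric regions of every volume, and continuity of the profile follows by the standard argument in \cite{MR2976521}. So the real content is strict monotonicity, and my plan is to use the strict convexity of $f$ to show that cutting an isoperimetric region of volume $v_2$ by a level set of $f$ produces a competitor of volume $v_1$ with strictly smaller perimeter.

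Fix $0<v_1<v_2<|C_r|$ and let $E\subset C_r$ be isoperimetric of volume $v_2$, so $P(E)=I_r(v_2)$. The map $t\mapsto|E\cap C_t|$ is continuous, nondecreasing, and runs from $0$ to $v_2$ as $t$ runs from $\inf f$ to $r$, so by the intermediate value theorem there is $t_0\in(\inf f,r)$ with $|E\cap C_{t_0}|=v_1$. I set $F:=E\cap C_{t_0}$ and $G:=E\setminus C_{t_0}$; then $|F|=v_1$, $|G|=v_2-v_1>0$, and $x_0\in\intt(C_{t_0})$ ensures $\overline{G}\subset M\setminus\{x_0\}$, which is where the unit field $X:=\n f/|\n f|$ is smooth.

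The key step applies the Gauss--Green theorem to $X$ on $G$. As observed just after Lemma~\ref{lem:infI_r}, $\divv(X)$ is the mean curvature of the level hypersurface through each point, hence is strictly positive on $M\setminus\{x_0\}$ by strict convexity of $f$; together with $|G|>0$ this gives $\int_G\divv(X)\,dM>0$. The reduced boundary $\ptl^*G$ consists of the lateral portion $\ptl^*E\cap(M\setminus C_{t_0})$ (with $\nu_G=\nu_E$) and the cap $\ptl C_{t_0}\cap E$ (with $\nu_G=-X$). Because $E\subset C_r$, at any point of $\ptl^*E\cap\ptl C_r$ the set $E$ must locally fill $C_r$ (otherwise it could not have density $1/2$ in $M$), forcing $\nu_E=X$ there; elsewhere on the lateral portion $\langle X,\nu_E\rangle\le1$. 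Assembling these contributions,
\[
0<\int_G\divv(X)\,dM\le P(E,M\setminus C_{t_0})-H^{n-1}(E\cap\ptl C_{t_0}).
\]

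Finally, the standard decomposition of perimeter along $\ptl C_{t_0}$ yields $P(E)=P(E,\intt(C_{t_0}))+H^{n-1}(\ptl^*E\cap\ptl C_{t_0})+P(E,M\setminus C_{t_0})$ and $P(F)\le P(E,\intt(C_{t_0}))+H^{n-1}(\ptl^*E\cap\ptl C_{t_0})+H^{n-1}(E\cap\ptl C_{t_0})$; the common term $H^{n-1}(\ptl^*E\cap\ptl C_{t_0})$ cancels in $P(E)-P(F)$, so
\[
P(E)-P(F)\ge P(E,M\setminus C_{t_0})-H^{n-1}(E\cap\ptl C_{t_0})>0.
\]
Hence $I_r(v_1)\le P(F)<P(E)=I_r(v_2)$, completing strict monotonicity. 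The main technical delicacy I anticipate is the reduced-boundary bookkeeping at $\ptl C_{t_0}$ for possibly non-generic $t_0$; the symmetric cancellation above dispatches it without having to restrict to a.e.\ $t_0$.
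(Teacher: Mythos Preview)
Your proof is correct and follows the same strategy as the paper: cut an isoperimetric set $E$ of volume $v_2$ by a sublevel set $C_{t_0}$ of $f$ to obtain a competitor of volume $v_1$, and use the divergence theorem with $X=\nabla f/|\nabla f|$ on the outer piece $E\setminus C_{t_0}$, the strict positivity of $\divv X$ furnishing the strict inequality.

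The only difference is precisely the technical point you flag at the end. The paper does \emph{not} work directly at the target level $t_0$; instead it picks a sequence $s_i\to t_0$ of ``good'' levels for which $H^{n-1}(\partial^*E\cap\partial C_{s_i})=0$ (so that $P(E\cap C_{s_i})=P(E,\intt C_{s_i})+H^{n-1}(E\cap\partial C_{s_i})$ holds exactly, via \cite[Chap.~28]{MR2976521}), derives the inequality at each $s_i$, and then passes to the limit using lower semicontinuity of perimeter. Your direct bookkeeping---carrying the term $H^{n-1}(\partial^*E\cap\partial C_{t_0})$ in both the decomposition of $P(E)$ and the upper bound for $P(F)$ so that it cancels---is a legitimate alternative that avoids the limiting step; it relies on the Federer-type structure formula for $\partial^*(E\cap C_{t_0})$ and $\partial^*(E\setminus C_{t_0})$, which is available here since $\partial C_{t_0}$ is a smooth compact hypersurface (every $t_0>\inf f$ is a regular value). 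Either route yields the same conclusion with comparable effort.
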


\begin{proof}
Continuity follows from the compactness of $C_r$ and the lower semicontinuity of perimeter since a limit of isoperimetric regions of volumes converging to $v\in (0,|C_r|)$ is an isoperimetric region of volume $v$.

To check that $I_r$ is non-decreasing, consider an isoperimetric region $E\subset C_r$ of volume $v\in (0,\vol{C_r})$. Let $0<w<v$ and take $s\in (0,r)$ such that the set $E_s:=E\cap C_s$ has volume $w$. Choose a sequence of radii $s_i$ converging to $s$ such that $P(E\cap C_{s_i})=P(E,\intt C_{s_i})+H^{n-1}(E\cap\ptl C_{s_i})$ and
\[
\int_{E\setminus C_{s_i}} \divv X\,dM=-\int_{E\cap \ptl C_{s_i}} \escpr{X,|\nabla f|^{-1}\nabla f}\,dH^{n-1}+\int_{\ptl^*E\setminus C_{s_i}} \escpr{X,\nu_E}\,d|\ptl E|,
\]
for any vector field $X$ of class $C^1$ with compact support in an open neighborhood of $C_r\setminus \intt C_{s_i}$. In the above formula, $\ptl^*E$ is the reduced boundary of $E$ and $d|\ptl E|$ is the perimeter measure. We apply this formula to $X=\nabla f/|\nabla f|$. Since $\divv X>0$ on $M\setminus\{x_0\}$, and $\escpr{X,\nu_E}\le 1$ we have
\[
\int_{E\setminus C_{s_i}} \divv X\,dM+H^{n-1}(E\cap\ptl C_{s_i})\le P(E,M\setminus C_{s_i}).
\]
Adding $P(E,\intt C_{s_i})$ to both sides of the above inequality and estimating $P(E,\intt C_{s_i})+P(E,M\setminus C_{s_i})\le P(E)$, we get
\[
\int_{E\setminus C_{s_i}} \divv X\,dM+P(E\cap C_{s_i})\le P(E).
\]
Taking inferior limits, and using the lower semicontinuity of perimeter, we obtain
\[
P(E_s)<\int_{E\setminus C_s} \divv X\,dM+P(E_s)\le P(E),
\]
and so
\[
I_r(w)\le P(E_s)< P(E)=I_r(v).
\]
Thus $I_r$ is a strictly increasing function.
\end{proof}

\begin{remark}
We point out that only the condition $\divv X > 0$ on the set $E\setminus C_{s_i}$ has been used in the proof of Lemma~\ref{lem:I_r}. Hence the proof works if we merely assume that the level sets of the exhaustion function f have positive mean curvature and that the set of critical points of f has measure zero.
\end{remark}

The following elementary lemma will be needed to prove our main result

\begin{lemma}
\label{lem:right-cont}
Let $\{f_i\}_{i\in\nn}$ be a non-increasing $(f_i\ge f_{i+1})$ sequence of continuous non-decreasing functions defined on an open interval $I\subset\rr$. Assume the limit $f(x)=\lim_{i\to\infty} f_i(x)$ exists for every $x\in I$. Then $f$ is a right-continuous function.
\end{lemma}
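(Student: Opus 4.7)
The plan is to combine three simple ingredients: monotonicity of the limit $f$, pointwise convergence at the single point $x_0$ we test, and continuity of the individual $f_i$. Fix $x_0 \in I$. First I would observe that $f$ is itself non-decreasing: since each $f_i$ is non-decreasing, for $x \le y$ in $I$ we have $f_i(x) \le f_i(y)$, and passing to the limit gives $f(x) \le f(y)$. In particular, the one-sided limit $L := \lim_{x\to x_0^+} f(x)$ exists and satisfies $L \ge f(x_0)$; all that remains is to prove $L \le f(x_0)$.

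Fix $\varepsilon > 0$. Because $f_i(x_0) \downarrow f(x_0)$ as $i \to \infty$, I can pick an index $i_0$ with $f_{i_0}(x_0) \le f(x_0) + \varepsilon/2$. By continuity of $f_{i_0}$ at $x_0$ there exists $\delta>0$ (with $(x_0,x_0+\delta)\subset I$) such that
\[
f_{i_0}(x) \le f_{i_0}(x_0) + \tfrac{\varepsilon}{2} \qquad \text{for all } x \in [x_0,x_0+\delta).
\]
Finally, since the sequence $\{f_j(x)\}$ is non-increasing in $j$, its limit $f(x)$ satisfies $f(x) \le f_{i_0}(x)$ for every $x \in I$. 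Combining these bounds yields
\[
f(x) \le f_{i_0}(x) \le f_{i_0}(x_0) + \tfrac{\varepsilon}{2} \le f(x_0) + \varepsilon
\]
for every $x\in[x_0,x_0+\delta)$. Hence $L \le f(x_0) + \varepsilon$, and as $\varepsilon>0$ was arbitrary, $L = f(x_0)$, proving right-continuity at $x_0$.

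There is no real obstacle here; the argument is a standard Dini-type interchange. The one point worth stating explicitly is the direction of the monotone convergence: it is precisely because the sequence $f_i$ is non-increasing that the inequality $f(x) \le f_i(x)$ holds for every fixed $i$, which is what lets us transfer the continuity of $f_{i_0}$ into a one-sided bound on the limit $f$. Note that left-continuity need not hold: the same inequality $f \le f_i$ only controls $f$ from above, and increasing jumps can (and in our application will) persist in the limit.
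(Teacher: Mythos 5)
Your proof is correct, and it uses the same basic ingredients as the paper's argument --- the monotonicity of the limit $f$, the inequality $f\le f_i$ coming from the non-increasing character of the sequence, and the continuity of a single approximating function together with $f_i(x_0)\to f(x_0)$ --- but the execution is organized differently and is in fact cleaner. The paper argues sequentially: for an arbitrary sequence $x_i\downarrow x$ it chooses, for \emph{each} index $i$, an auxiliary point $z_i\in(x,x_i)$ with $f_i(z_i)\le f_i(x)+1/i$, deduces $\limsup_i f(z_i)\le f(x)$, and then needs an interleaving subsequence $j(i)$ with $x_i<z_{j(i)}$ so that the monotonicity of $f$ transfers the bound from the $z$'s to the $x_i$'s. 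You instead first observe that the right-hand limit $L=\lim_{x\to x_0^+}f(x)$ exists because $f$ is non-decreasing, then freeze a single index $i_0$ with $f_{i_0}(x_0)\le f(x_0)+\varepsilon/2$ and use the continuity of that one function plus $f\le f_{i_0}$ to bound $f$ on a whole right-neighborhood of $x_0$. This avoids the diagonal choice of the $z_i$ and the subsequence bookkeeping entirely, at the modest price of invoking the existence of one-sided limits of monotone functions; it also isolates transparently the general fact underlying both proofs, namely that a non-increasing limit of continuous functions is upper semicontinuous, and an upper semicontinuous non-decreasing function is right-continuous. Your closing remark about why left-continuity can fail is consistent with the counterexample given in the paper's remark following the lemma.
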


\begin{remark}
The hypotheses in Lemma~\ref{lem:right-cont} do not imply the left-continuity of $f$, as shown by the following example. Taking
\[
f_i(x)=\begin{cases}
1, &0\le x,
\\
1+i\,x, & -1/i\le x\le 0,
\\
0, &x\le -1/i,
\end{cases}
\]
we immediately see that the limit of the sequence $\{f_i\}_{i\in\nn}$ is the characteristic function of the interval $[0,\infty)$, which is not left-continuous.
\end{remark}

\begin{proof}[Proof of Lemma~\ref{lem:right-cont}]
Fix $x\in I$. Let $\{x_i\}_{i\in\nn}$ be any sequence such that $x_i\ge x$. Since $f$ is a non-decreasing function, $f(x)\le f(x_i)$ for all $i$. Hence
\begin{equation}
\label{eq:right1}
f(x)\le \liminf_{i\to\infty} f(x_i).
\end{equation}

Assume now that $x=\lim_{i\to\infty} x_i$. Let us build first an auxiliary sequence $\{z_i\}_{i\in\nn}$ strictly decreasing, converging to $x$ and satisfying
\begin{equation}
\label{eq:zi}
\limsup_{i\to\infty} f(z_i)\le f(x).
\end{equation}
To this aim, starting from an arbitrary $z_1>x$ we inductively choose a point $z_i$ satisfying $x<z_i<\min\{z_{i-1},x+i^{-1}\}$ and
\[
0\le f_i(z_i)-f_i(x)\le\frac{1}{i}.
\]
This last condition follows from the continuity of $f_i$. By construction, $\{z_i\}_{i\in\nn}$ is decreasing and converges to $x$. Since $f_i\ge f$ we get
\[
f(z_i)\le f_i(z_i)\le f_i(x)+\frac{1}{i},
\] 
and taking $\limsup$ we obtain \eqref{eq:zi}. Now choose a subsequence $\{y_i\}_{i\in\nn}$ of $\{x_i\}_{i\in\nn}$ such that $\lim_{i\to\infty} f(y_i)=\limsup_{i\to\infty} f(x_i)$. Since the sequence $\{y_{i}\}_{i\in\nn}$ converges to $x$, for every $i\in\nn$, we can choose $y_{j(i)}$, with $j(i)$ increasing in $i$, such that $x\le y_{j(i)}<z_i$. As $f$ is non-decreasing,
\begin{equation}
\label{eq:right2}
\limsup_{i\to\infty} f(x_i)=\lim_{i\to\infty} f(y_i)=\lim_{i\to\infty} f(y_{j(i)})\le \limsup_{i\to\infty} f(z_{i})\le f(x)
\end{equation}
by \eqref{eq:zi}. Inequalities \eqref{eq:right1} and \eqref{eq:right2} then yield the right continuity of $f$.
\end{proof}

\section{Proof of the main result}

We give now the proof of our main result and their consequences

\begin{theorem}
\label{th:main}
Let $M$ be an $n$-dimensional complete manifold $M$ possessing a strictly convex Lipschitz continuous exhaustion function $f\in C^\infty(M)$. Then the isoperimetric profile $I_M$ of $M$ is non-decreasing and continuous.
\end{theorem}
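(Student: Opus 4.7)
The plan is to establish the three properties -- non-decreasingness, right-continuity, and left-continuity of $I_M$ -- separately. The first two follow easily from the lemmas already proved; the third is the substantive step and requires Lemma~\ref{lem:lambda}.

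Non-decreasingness is immediate: by Lemma~\ref{lem:infI_r} the function $I_M$ is the pointwise infimum of the $I_r$, and each $I_r$ is non-decreasing by Lemma~\ref{lem:I_r}, so the infimum is too. For right-continuity, I would pick any increasing sequence $r_k\uparrow\infty$ and observe that the $I_{r_k}$ form a non-increasing sequence of continuous non-decreasing functions (Lemma~\ref{lem:I_r}) whose pointwise limit is $I_M$ on every bounded subinterval of $(0,|M|)$ (Lemma~\ref{lem:infI_r}); Lemma~\ref{lem:right-cont} then delivers right-continuity.

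The main obstacle is left-continuity, which needs a direct construction modelled on the proof of Lemma~\ref{lem:infI_r}. I would fix $v_0\in(0,|M|)$ and a sequence $v_i\uparrow v_0$; since $I_M$ is already non-decreasing only $\liminf_i I_M(v_i)\ge I_M(v_0)$ remains. I would select near-minimisers $E_i$ with $|E_i|=v_i$ and $P(E_i)\le I_M(v_i)+1/i$, and then modify each $E_i$ into a set of volume exactly $v_0$ without inflating its perimeter too much. Concretely, I would fix sublevel sets $B:=C_r$ with $|B|>v_0$ and $D:=C_{r'}$ with $d(B,\ptl D)>r_0>0$, and use Lemma~\ref{lem:lambda} to produce, for each large $i$, a point $x_i\in D$ and a radius $s_i\to 0$ (chosen so that $\Lambda(s_i)\ge v_0-v_i$) satisfying $|B(x_i,s_i)\setminus E_i|\ge v_0-v_i$. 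Continuity of $s\mapsto|B(x_i,s)\setminus E_i|$ then lets me shrink $s_i$ to some $s_i^*\in(0,s_i]$ so that the ball adds volume exactly $v_0-v_i$.

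The set $F_i:=E_i\cup B(x_i,s_i^*)$ has volume $v_0$, and subadditivity of perimeter yields $I_M(v_0)\le P(F_i)\le I_M(v_i)+1/i+P(B(x_i,s_i^*))$. The conclusion $\liminf_i I_M(v_i)\ge I_M(v_0)$ follows once I verify that $P(B(x_i,s_i^*))\to 0$. This is precisely why Lemma~\ref{lem:lambda} is essential rather than a more naive placement of the ball: by confining the centres $x_i$ to the fixed bounded set $D$, Bishop's comparison for geodesic spheres (using a lower Ricci bound on $D$) forces the perimeters of the small balls to tend to $0$. Without this confinement, as the remark after Lemma~\ref{lem:infI_r} observes, the added perimeter could blow up in negatively curved regions, so the delicate point is exactly the one Lemma~\ref{lem:lambda} is designed to handle.
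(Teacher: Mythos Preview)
Your proposal is correct and follows essentially the same approach as the paper's own proof: non-decreasingness and right-continuity via Lemmas~\ref{lem:infI_r}, \ref{lem:I_r}, and \ref{lem:right-cont}, and left-continuity by adding a small ball, placed via Lemma~\ref{lem:lambda} inside a fixed bounded set, to near-minimisers of slightly smaller volume. Your write-up is in fact a bit more explicit than the paper about the choice of $B$, $D$, and $s_i$ and about why Bishop's comparison applies, but the strategy is identical.
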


\begin{proof}
Lemmas~\ref{lem:infI_r} and \ref{lem:I_r} imply that the profile $I_M$ is the limit of the non-increasing sequence $\{I_{r}\}_{r>\inf f}$ of continuous non-decreasing isoperimetric profiles. So $I_M$ is trivially non-decreasing and Lemma~\ref{lem:right-cont}  implies that $I_M$ is right-continuous.

To prove the left-continuity of $I_M$ at $v>0$, we take a sequence $\{v_i\}_{i\in\nn}$ such that $v_i\uparrow v$. Since $I_M$ is non-decreasing, $I_M(v_i)\le I_M(v)$. Taking limits we get $\limsup_{i\to\infty} I_M(v_i)\le I_M(v)$. To complete the proof, we shall show 
\begin{equation}
\label{eq:left-liminf}
I_M(v)\le\liminf_{i\to\infty} I_M(v_i).
\end{equation}
Consider a sequence $\{E_i\}_{i\in\nn}$ of sets satisfying $|E_i|=v_i$ and $P(E_i)\le I_M(v_i)+1/i$. By Lemma~\ref{lem:lambda}, we can find a bounded sequence $\{x_i\}_{i\in\nn}$ and a sequence of radii $\{s_i\}_{i\in\nn} $ converging to $0$ so that
\[
|B(x_i,s_i)\setminus E_i|\ge v-v_i>0.
\]
We argue now as in the final part of the proof of Lemma~\ref{lem:infI_r}: since the function $s\in [0,s_i]\mapsto |B(x_i,s)\setminus E_i|$ is continuous, there exists, for large $i$, some $s_i^*\in (0,s_i]$ such that $|B(x_i,s_i^*)\setminus E_i|=v-v_i$. Taking $F_i:=E_i\cup B(x_i,s_i^*)$ we have $|F_i|=|E_i|+|B(x_i,s_i^*)\setminus E_i|=v$, and
\[
I_M(v)\le P(F_i)\le P(E_i)+P(B(x_i,s_i^*))\le I_M(v_i)+(1/i)+P(B(x_i,s_i^*)).
\]
Taking limits we get \eqref{eq:left-liminf}.
\end{proof}

\begin{theorem}
\label{thm:main-hadamard}
The isoperimetric profile $I_M$ of a Hadamard manifold $M$ is a continuous and non-decreasing function.
\end{theorem}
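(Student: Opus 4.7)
The plan is to reduce Theorem \ref{thm:main-hadamard} to Theorem \ref{th:main} by exhibiting, on any Hadamard manifold $M$, a strictly convex Lipschitz continuous exhaustion function of class $C^\infty$. The natural candidate is the squared distance function $f_0(x):=\tfrac{1}{2}\,d(x,x_0)^2$ to a fixed point $x_0\in M$. Since the exponential map at $x_0$ is a global diffeomorphism on a Hadamard manifold, $f_0\in C^\infty(M)$; moreover the classical Hessian comparison for the squared distance under non-positive sectional curvature (see \cite{MR823981}) gives $\n^2 f_0\ge g$ as quadratic forms on $TM$, that is,
\[
(f_0\circ\ga)''(t)\ge 1
\]
along every unit-speed geodesic $\ga$. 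In particular $f_0$ is strictly convex, and its sublevel sets are the closed geodesic balls centered at $x_0$, which are compact, so $f_0$ is already a strictly convex $C^\infty$ exhaustion function. However $|\n f_0|(x)=d(x,x_0)$ is unbounded, so $f_0$ itself is not Lipschitz.

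To repair this defect I would post-compose with a carefully chosen smooth, strictly increasing function $\phi:[0,\infty)\to[0,\infty)$, for example $\phi(t):=\sqrt{1+t}-1$, and set $g:=\phi\circ f_0$. Clearly $g\in C^\infty(M)$, and since $\phi$ is strictly increasing with $\phi(t)\to+\infty$, $g$ remains an exhaustion function. The Lipschitz property is immediate from
\[
|\n g|=\phi'(f_0)\,|\n f_0|=\frac{d(\cdot,x_0)}{2\sqrt{1+d(\cdot,x_0)^2/2}},
\]
which is uniformly bounded on $M$ by $1/\sqrt{2}$.

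The only delicate point, and the one I expect to be the main obstacle, is preserving strict convexity, since $\phi''<0$ pushes in the wrong direction. Along a unit-speed geodesic $\ga$ the chain rule yields
\[
(g\circ\ga)''=\phi''(f_0)\,\bigl((f_0\circ\ga)'\bigr)^2+\phi'(f_0)\,(f_0\circ\ga)''.
\]
Combining $\bigl((f_0\circ\ga)'\bigr)^2\le|\n f_0|^2=2f_0$ with the Hadamard bound $(f_0\circ\ga)''\ge 1$, strict convexity of $g$ reduces to the pointwise inequality $\phi'(t)>2t\,|\phi''(t)|$ for all $t\ge 0$; for $\phi(t)=\sqrt{1+t}-1$ this becomes $1+t>t$, which is trivially true. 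Hence $g$ is a strictly convex Lipschitz continuous $C^\infty$ exhaustion function on $M$, and Theorem \ref{th:main} applies directly to $g$ and yields both the monotonicity and the continuity of $I_M$. The conceptual content of this last step is that any function $\phi$ concave enough to tame the linear growth of $|\n f_0|$ inevitably threatens convexity; the argument succeeds precisely because the sharp bound $\n^2 f_0\ge g$ available on Hadamard manifolds beats the concavity of this specific $\phi$.
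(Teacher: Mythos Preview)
Your proposal is correct and follows exactly the paper's approach: the paper also composes $h=\tfrac{1}{2}d^2$ with $m(x)=(1+x)^{1/2}$, which is your $\phi$ up to an irrelevant additive constant, and verifies Lipschitzness and strict convexity by the same chain-rule computation combined with $\nabla^2 h\ge 1$ and Cauchy--Schwarz. Your reduction of the convexity check to the scalar inequality $\phi'(t)>2t\,|\phi''(t)|$ is in fact a tidier packaging of the paper's Hessian estimate; just avoid naming the new function $g$, since that symbol is already the metric.
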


\begin{proof}
We only need to construct a strictly convex Lipschitz continuous exhaustion function. Fix $x_0\in M$ and let $h=\tfrac{1}{2}\,d^2$, where $d$ be the distance function to $x_0$. Standard comparison results for the Laplacian of the squared distance function imply $\nabla^2 h\ge 1$, \cite[Chap.~3]{MR2243772}. However, $h$ is not Lipschitz continuous on $M$. We consider instead the $C^\infty$ function $m:(-1,+\infty)\to\rr^+$ defined by $m(x)=(1+x)^{1/2}$, and the composition $f=m\circ h$. Take some tangent vector $e$ of modulus $1$ at some point of $M$. Then
\begin{align*}
\nabla(m\circ h)&=(m'\circ h)\,\nabla h,
\\
\nabla^2(m\circ h)(e,e)&=(m''\circ h)\,g(\nabla h,e)^2+(m'\circ h)\,\nabla^2h(e,e).
\end{align*}
From the first formula we obtain
\[
\nabla f=\frac{d}{(1+\tfrac{1}{2}\,d^2)^{1/2}}\,\nabla d.
\]
Hence $|\nabla f|$ is uniformly bounded from above and so the function $f$ is Lipschitz continuous on $M$. From the formula for the Hessian of $(m\circ h)$ we get
\[
\nabla^2f(e,e)=-\frac{1}{4}\,\frac{1}{(1+\tfrac{1}{2}\,d^2)^{3/2}}\,g(\nabla h,e)^2+\frac{1}{2}\,\frac{1}{(1+\tfrac{1}{2}\,d^2)^{1/2}}\,\nabla^2h(e,e).
\]
By Schwarz's inequality $g(\nabla h,e)\le d$ and we have
\[
\nabla^2 f(e,e)\ge \frac{1}{2}\,\frac{1}{(1+\tfrac{1}{2}\,d^2)^{3/2}}>0.
\]
Hence $f$ is strictly convex. Since the sublevel sets of $f$ are geodesic balls, $f$ is an exhaustion function on $M$. Theorem~\ref{th:main} then implies that the isoperimetric profile of $M$ is a continuous and non-decreasing function.
\end{proof}

\begin{theorem}
\label{thm:strictcurvature}
The isoperimetric profile $I_M$ of a complete non-compact manifold $M$ with strictly positive sectional curvatures is a continuous and non-decreasing function.
\end{theorem}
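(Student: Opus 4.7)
The plan is to reduce this statement directly to Theorem~\ref{th:main}: it suffices to exhibit on $M$ a strictly convex Lipschitz continuous exhaustion function $f\in C^\infty(M)$, and once this is done the conclusion is immediate.

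The construction of $f$ proceeds in two steps, both indicated in Section~1. First I would invoke the theorem of Cheeger and Gromoll~\cite{MR0309010}: on a complete non-compact Riemannian manifold with strictly positive sectional curvatures, the soul is a single point $p_0$, and there exists a continuous strictly convex exhaustion function $\tilde f$ constructed from Busemann-type functions associated to rays issuing from $p_0$. Since each Busemann function is $1$-Lipschitz (being a uniform limit of $1$-Lipschitz distance functions), the resulting $\tilde f$ is itself Lipschitz continuous, with constant independent of the ray.

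Second, I would apply Greene and Wu's approximation theorem~\cite[Thm.~2]{MR0458336}, which allows one to approximate any continuous strictly convex function by $C^\infty$ strictly convex ones, uniformly on compact subsets of $M$. Since the underlying mollification (convolution in normal coordinates, then glued by a partition of unity on an exhaustion by compact sets of $M$) is based on convolution with non-negative smooth kernels of unit mass, it does not increase the Lipschitz constant by more than an arbitrarily small amount on each piece. Consequently, with a sufficiently fast decay of the mollification scale, one obtains a globally Lipschitz $C^\infty$ function $f$ on $M$ that is still strictly convex and still an exhaustion (the latter because $f$ stays arbitrarily close on compact sets to the exhaustion $\tilde f$). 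Theorem~\ref{th:main} then applies verbatim to conclude that $I_M$ is continuous and non-decreasing.

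The main obstacle, and really the only point where care is needed, is the simultaneous preservation of strict convexity, the Lipschitz bound, and the exhaustion property during the smoothing. Once this technical matter is handled in the way sketched above, the application of Theorem~\ref{th:main} is automatic; no further isoperimetric argument is required.
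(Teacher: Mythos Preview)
Your proposal is correct and follows essentially the same route as the paper: the paper's proof simply cites Greene and Wu~\cite{MR0458336} (building on Cheeger--Gromoll~\cite{MR0309010}, as noted in the introduction) for the existence of a $C^\infty$ strictly convex Lipschitz continuous exhaustion function, and then invokes Theorem~\ref{th:main}. You have supplied more detail on the Lipschitz preservation during the smoothing step than the paper does, but the strategy is identical.
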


\begin{proof}
The existence of a strictly convex Lipschitz continuous exhaustion function follows from  Theorem~1(a) in the paper by Greene and Wu \cite{MR0458336}. The properties of the isoperimetric profile from Theorem~\ref{th:main}.
\end{proof}


\bibliographystyle{abbrv}

\bibliography{cont-hadamard}

\end{document}